\newtheorem{priteo}{Theorem}
\newtheorem{lema}{Lemma}
\newtheorem{defi}{Definition}
\begin{document}

\title{ The cyclic Hopf $H~\mathrm{mod}~K$ Theorem}

\author[ The cyclic Hopf $H~\mathrm{mod}~K$ Theorem \today]{Adrian C. Murza}

\address{Centro de Matem\'atica da Universidade do Porto.\\ Rua do Campo Alegre 687, 4169-007 Porto, Portugal}
\date{\today}
\thanks{This work was supported by FCT grant $SFRH/ BD/ 64374/ 2009$.}
\begin{abstract}
The $H~\mathrm{mod}~K$ theorem gives all possible periodic solutions in a $\Gamma-$equivariant dynamical system, based on the group-theoretical aspects. In addition, it classifies the spatio temporal symmetries that are possible. By the contrary, the equivariant Hopf theorem guarantees the existence of families of small-amplitude periodic solutions bifurcating from the origin for each $\mathbf{C}-$axial subgroup of $\Gamma\times\mathbb{S}^1.$ In this paper we identify which periodic solution types, whose existence is guaranteed by the $H~\mathrm{mod}~K$ theorem, are obtainable by Hopf bifurcation, when the group $\Gamma$ is finite cyclic.
\end{abstract}

\keywords{Hopf bifurcation, spatio-temporal symmetry, finite cyclic group, periodic solutions}

\subjclass[2000]{34C23, 34C25, 37G40}

\maketitle

\section{Introduction}\label{section Introduction}
In the formalism of equivariant differential equations \cite{GS85}, \cite{GS88} and \cite{GS03} have been described two methods for obtaining periodic solutions: the $H~\mathrm{mod}~K$ theorem and the equivariant Hopf theorem. While the $H~\mathrm{mod}~K$ theorem offers the complete set of possible periodic solutions based exclusively on the structure of the group $\Gamma$ acting on the differential equation, the equivariant Hopf theorem guarantees the existence of families of small-amplitude periodic solutions bifurcating from the origin for all $\mathbf{C}-$axial subgroups of $\Gamma\times\mathbb{S}^1.$

Not always all solutions predicted by the $H~\mathrm{mod}~K$ theorem can be obtained by the generic Hopf bifurcation \cite{GS03}. In \cite{Abelian Hopf} there are described which periodic solutions, whose existence is guaranteed by the $H~\mathrm{mod}~K$ theorem are obtainable by the Hopf bifurcation when the group $\Gamma$ is finite abelian. In this article, we pose a more specific question: what periodic solutions predicted by the $H~\mathrm{mod}~K$ theorem are obtainable by the Hopf bifurcation when the group $\Gamma$ is finite cyclic. We will answer this question by finding which additional constraints have to be added to the Abelian Hopf $H~\mathrm{mod}~K$ theorem \cite{Abelian Hopf} so that the periodic solutions predicted by the $H~\mathrm{mod}~K$ theorem coincide with the ones obtained by the equivariant Hopf theorem when the group $\Gamma$ is finite cyclic.

\section{The $H~\mathrm{mod}~K$ theorem}\label{section H mod K theorem}

We call $(\gamma,\theta)\in\Gamma\times\mathbf{S}^1$ a spatio-temporal symmetry of the solution $x(t).$ A spatio-temporal symmetry of $x(t)$ for which $\theta=0$ is called a spatial symmetry, since it fixes the point $x(t)$ at every moment of time. The group of all spatio-temporal symmetries of $x(t)$ is denoted
$$\Sigma_{x(t)}\subseteq\Gamma\times\mathbf{S}^1.$$
As shown in \cite{GS03}, the symmetry group $\Sigma_{x(t)}$ can be identified with a pair of subgroups $H$ and $K$ of $\Gamma$ and a homomorphism $\Theta:H\rightarrow\mathbf{S}^1$ with kernel $K.$ Define
\begin{equation}
    \begin{array}{l}
        K=\{\gamma\in\Gamma:\gamma x(t)=x(t)~~\forall t\}\\
        H=\{\gamma\in\Gamma:\gamma x(t)=\{x(t)\}~~\forall t\}.
    \end{array}
\end{equation}

The subgroup $K\subseteq\Sigma_{x(t)}$ is the group of spatial symmetries of $x(t)$ and the subgroup $H$ consists of those symmetries that preserve the trajectory of $x(t),$ ie. the spatial parts of the spatio-temporal symmetries of $x(t).$ The groups $H\subseteq\Gamma$ and $\Sigma_{x(t)}\subseteq\Gamma\times\mathbf{S}^1$ are isomorphic; the isomorphism is in fact just the restriction to $\Sigma_{x(t)}$ of the projection of $\Gamma\times\mathbf{S}^1$ onto $\Gamma.$ Therefore the group $\Sigma_{x(t)}$ can be written as
$$\Sigma^{\Theta}=\{  (h,\Theta(h))  :h\in H, \Theta(h)\in\mathbf{S}^1  \}.$$
Moreover, we call $\Sigma^{\Theta}$ a twisted subgroup of $\Gamma\times\mathbf{S}^1.$
In our case $\Gamma$ is a finite cyclic group and the $H~\mathrm{mod}~K$ theorem states necessary and sufficient conditions for the existence of a periodic solution to a $\Gamma-$ equivariant system of ODEs with specified spatio-temporal symmetries $K\subset H\subset \Gamma.$ Recall that the isotropy subgroup $\Sigma_x$ of a point $x\in\mathbb{R}^n$ consists of group elements that fix $x,$ that is they satisfy
$$\Sigma_x=    \sigma\in \Gamma :\sigma x=x.$$
Let $N(H)$ be the normalizer of $H$ in $\Gamma,$ satisfying $N(H)=\{\gamma\in\Gamma:\gamma H=H\gamma\}.$ Let also $\mathrm{Fix}(K)=\{x\in\mathbb{R}^n:kx=x~\forall k\in K\}.$

\begin{defi}\label{defi L}
Let $K\subset\Gamma$ be an isotropy subgroup. The variety $L_K$ is defined by
$$L_K=\bigcup_{\gamma\notin K}\mathrm{Fix}(\gamma)\cap\mathrm{Fix}(K).$$
\end{defi}

\begin{priteo}\label{teorema H mod K}
($H~\mathrm{mod}~K$ Theorem \cite{GS03}) Let $\Gamma$ be a finite group acting on $\mathbb{R}^n.$ There is a periodic solution to some $\Gamma-$equivariant system of ODEs on $\mathbb{R}^n$ with spatial symmetries $K$ and spatio-temporal symmetries $H$ if and only if the following conditions hold:
\begin{itemize}
\item [(a)] $H/K$ is cyclic;
\item [(b)] $K$ is an isotropy subgroup;
\item [(c)] $dim~Fix(K)\geqslant2.$ If $dim~Fix(K)=2,$ then either $H=K$ or $H=N(K)$;
\item [(d)] $H$ fixes a connected component of $\mathrm{Fix(K)\backslash L_K,}$ where $L_K$ appears as in Definition \ref{defi L} above;
\end{itemize}
Moreover, if $(a)-(d)$ hold, the system can be chosen so that the periodic solution is stable.
\end{priteo}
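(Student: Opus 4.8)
The plan is to prove the two implications separately: \textbf{necessity}, that a periodic solution forces conditions (a)--(d); and \textbf{sufficiency}, that (a)--(d) permit the construction of a $\Gamma$-equivariant system carrying such a solution, with the stability refinement at the end.

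\textbf{Necessity.} I would begin with a periodic solution $x(t)$ and the associated data $K$, $H$ and the homomorphism $\Theta\colon H\to\mathbf{S}^1$ with kernel $K$, exactly as set up before the statement. Condition (a) is then immediate, since $H/K\cong\Theta(H)$ is a finite subgroup of $\mathbf{S}^1$ and every finite subgroup of the circle is cyclic. For (b) I would show that the isotropy subgroup $\Sigma_{x(t)}$ takes its minimal value on an open dense subset of one period: because only finitely many isotropy subgroups occur and $K=\bigcap_t\Sigma_{x(t)}$, that minimum is attained and equals $K$, exhibiting $K$ as an isotropy subgroup. For (c) I would use that the orbit is a non-constant closed curve lying in the flow-invariant subspace $\mathrm{Fix}(K)$; since a scalar autonomous ODE has only monotone solutions, this forces $\dim\mathrm{Fix}(K)\geq 2$. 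In the borderline case $\dim\mathrm{Fix}(K)=2$ I would identify $\mathrm{Fix}(K)$ with $\CC$, observe that $N(K)/K$ acts as a finite subgroup of $O(2)$, and determine which such subgroups can preserve the closed curve while acting by genuine phase shifts; this analysis pins $H$ down to $K$ or $N(K)$. Finally, for (d), the orbit avoids points of strictly larger isotropy and hence does not meet $L_K$, so it lies in a single connected component $\mathcal{C}$ of $\mathrm{Fix}(K)\setminus L_K$; since $H$ maps the orbit to itself it must fix $\mathcal{C}$.

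\textbf{Sufficiency.} Here I would reduce to $\mathrm{Fix}(K)$, on which $N(K)/K$ acts, and afterwards extend the vector field $\Gamma$-equivariantly to $\mathbb{R}^n$. Writing $H/K=\langle\delta K\rangle$ with prescribed phase $\Theta(\delta)$, I would place inside the fixed component $\mathcal{C}$ a small, normally hyperbolic, attracting closed curve that the generator carries to itself with exactly that phase shift, using an averaging or normal-form construction so that the spatial symmetry group of the orbit is precisely $K$ and no spurious temporal symmetry is created. The stability claim then follows by arranging normal hyperbolicity transverse to $\mathrm{Fix}(K)$ as well as within it.

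The hard part will be the sufficiency construction: one must guarantee that the constructed orbit has spatio-temporal symmetry group \emph{exactly} $H$ and spatial symmetry group \emph{exactly} the isotropy subgroup $K$ --- neither larger nor smaller --- and that it genuinely avoids $L_K$. Condition (d) is precisely what makes a consistent placement in $\mathcal{C}$ possible, while the $\dim\mathrm{Fix}(K)=2$ case of (c) is the subtlest point in the necessity direction and will require the careful $O(2)$ analysis described above.
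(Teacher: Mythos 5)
The paper does not actually prove this statement: Theorem \ref{teorema H mod K} is quoted verbatim from \cite{GS03} with no proof supplied, so the only meaningful comparison is with the standard proof in that reference (due to Buono and Golubitsky). Your outline reproduces the architecture of that proof --- necessity of (a) from $H/K\cong\Theta(H)\subseteq\mathbf{S}^1$, confinement of the orbit to the flow-invariant subspace $\mathrm{Fix}(K)$ for (c), and a construction of an attracting invariant circle for sufficiency --- but there is a concrete gap in your necessity argument for (b), and the same missing idea undermines your claim in (d).

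For (b) you argue that $K=\bigcap_t\Sigma_{x(t)}$ is an intersection of finitely many occurring isotropy subgroups and that ``the minimum is attained and equals $K$.'' That does not follow: a priori the orbit could meet only points whose isotropy subgroups all strictly contain $K$ while still intersecting to $K$ (two incomparable subgroups $\Sigma_1,\Sigma_2\supsetneq K$ with $\Sigma_1\cap\Sigma_2=K$), in which case $K$ is never attained and there is no reason for the family $\{\Sigma_{x(t)}\}$ to have a minimum under inclusion at all. The missing ingredient is the uniqueness theorem for the $\Gamma$-equivariant ODE: if $\gamma x(t_0)=x(t_0)$ for a \emph{single} $t_0$, then $\gamma x(\cdot)$ and $x(\cdot)$ are two solutions through the same point, hence coincide, so $\gamma\in K$. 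This shows at once that $\Sigma_{x(t)}=K$ for \emph{every} $t$ (giving (b)) and that the trajectory never meets $\mathrm{Fix}(\gamma)$ for $\gamma\notin K$, i.e.\ it avoids $L_K$ --- which is precisely the assertion you state without justification at the start of your argument for (d). You should also be explicit that $H\subseteq N(K)$ (because $hKh^{-1}$ is the spatial symmetry group of the time-translated, hence identical, trajectory), since that is needed before the $\dim\mathrm{Fix}(K)=2$ analysis can conclude $H=K$ or $H=N(K)$. On the sufficiency side your sketch is only a gesture: the standard construction takes a point of isotropy exactly $K$ in the $H$-fixed component, forms its finite orbit under a generator of $H/K$, joins consecutive images by paths inside that component to get an $H$-invariant embedded circle with the prescribed phase shifts, and only then builds a vector field having this circle as a normally hyperbolic attracting limit cycle; conditions (c) and (d) are exactly what make the embedding possible and the resulting symmetry group equal to $H$ rather than something larger.
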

\begin{defi}
The pair of subgroups $(H,K)$ is called admissible if the pair satisfies hypotheses $(a)-(d)$ of Theorem \ref{teorema H mod K}, that is, if there exist periodic solutions to some $\Gamma-$equivariant system with $(H,K)$ symmetry.
\end{defi}

\section{Hopf bifurcation with cyclic symmetries}\label{Hopf bifurcation with cyclic symmetries}
In the following we recall two results from \cite{Abelian Hopf} needed later for the proof of the Theorem \ref{teorema cyclic Hopf}. Let $x_0\in\mathbb{R}^n.$ Suppose that $V$ is an $\Sigma_{x_0}-$invariant subspace of $\mathbb{R}^n.$ Let $\hat{V}=x_0+V,$ and observe that $\hat{V}$ is also $\Sigma_{x_0}-$invariant.
\begin{lema}\label{lema center subspace}
Let $g$ be an $\Sigma_{x_0}-$equivariant map on $\hat{V}$ such that $g(x_0)=0.$ Then $g$ extends to a $\Gamma-$equivariant mapping $f$ on $\mathbb{R}^n$ so that the center subspace of $(df)_{x_0}$ equals the center subspace of $(dg)_{x_0}.$
\end{lema}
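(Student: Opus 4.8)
The plan is to carry out the extension in two stages: first promote $g$ to a $\Sigma_{x_0}$-equivariant map on all of $\mathbb{R}^n$ whose transverse spectrum is hyperbolic, and then induce this up to a genuinely $\Gamma$-equivariant map while keeping control of the linearization at $x_0$. To begin, I would fix a $\Gamma$-invariant inner product on $\mathbb{R}^n$, obtained by averaging an arbitrary one over the finite group $\Gamma$; its restriction is in particular $\Sigma_{x_0}$-invariant. Let $W$ denote the orthogonal complement of $V$, so that $\mathbb{R}^n=V\oplus W$ is a direct sum of $\Sigma_{x_0}$-invariant subspaces. Since $g(x_0)=0$ and $g$ is defined on $\hat{V}=x_0+V$, the linear map $(dg)_{x_0}$ acts on $V$, and its center subspace $E^c$ is a $\Sigma_{x_0}$-invariant subspace of $V$. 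Because the conclusion only compares center subspaces at $x_0$, it is enough to arrange that the extension contributes no new purely imaginary eigenvalues in the $W$-direction.

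Next I would define a $\Sigma_{x_0}$-equivariant extension $\tilde{g}$ on $\mathbb{R}^n$ by $\tilde{g}(x_0+v+w)=g(x_0+v)-w$ for $v\in V$ and $w\in W$. Equivariance follows from the $\Sigma_{x_0}$-equivariance of $g$ on $\hat{V}$ together with the facts that the splitting $V\oplus W$ is $\Sigma_{x_0}$-invariant and that $-\mathrm{Id}_W$ commutes with the linear $\Sigma_{x_0}$-action on $W$. By construction $\tilde{g}$ restricts to $g$ on $\hat{V}$, and its linearization at $x_0$ is block diagonal, $(d\tilde{g})_{x_0}=(dg)_{x_0}\oplus(-\mathrm{Id}_W)$. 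Since $-\mathrm{Id}_W$ is hyperbolic, the center subspace of $(d\tilde{g})_{x_0}$ equals $E^c$, the center subspace of $(dg)_{x_0}$.

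Finally I would pass from the $\Sigma_{x_0}$-equivariant map $\tilde{g}$ to a $\Gamma$-equivariant map $f$. The natural prescription is to transport $\tilde{g}$ around the orbit by $f(\gamma y)=\gamma\,\tilde{g}(y)$; on choosing coset representatives $\gamma_1=e,\dots,\gamma_m$ of $\Sigma_{x_0}$ in $\Gamma$, this is consistent \emph{within} each coset precisely because $\tilde{g}$ is $\Sigma_{x_0}$-equivariant. \textbf{The main obstacle} is well-definedness across points lying in several translates $\gamma\hat{V}$, together with the global gluing of these local prescriptions into one smooth $\Gamma$-equivariant map on all of $\mathbb{R}^n$: the formula $y\mapsto\gamma\,\tilde{g}(\gamma^{-1}y)$ depends a priori on the chosen $\gamma$, so it cannot be used globally without extra structure. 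I expect to resolve this by localizing near the orbit $\Gamma x_0$. Since $\Sigma_{x_0}$ is exactly the isotropy of $x_0$, the points $\gamma_i^{-1}x_0$ are distinct and admit disjoint $\Sigma_{x_0}$-invariant neighborhoods; choosing a $\Sigma_{x_0}$-invariant bump function $\phi$ that equals $1$ near $x_0$ and is supported in a small enough neighborhood, the transfer $f(y)=\sum_{i=1}^m\gamma_i\,(\phi\tilde{g})(\gamma_i^{-1}y)$ is a well-defined $\Gamma$-equivariant map that still restricts to $g$ near $x_0$. Because the contributions of the nontrivial cosets are supported away from $x_0$ and $\tilde{g}(x_0)=0$, only the identity-coset term survives in the linearization, so $(df)_{x_0}=(d\tilde{g})_{x_0}$ and hence the center subspace of $(df)_{x_0}$ equals that of $(dg)_{x_0}$, as required.
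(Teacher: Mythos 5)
Your proof is correct; note that the paper itself offers no argument for this lemma, deferring entirely to \cite{Abelian Hopf}. Your construction---a $\Sigma_{x_0}$-invariant splitting $\mathbb{R}^n=V\oplus W$ with a hyperbolic block $-\mathrm{Id}_W$ appended transversally so that no new center directions appear, followed by the coset-transfer/bump-function passage to a $\Gamma$-equivariant map (precisely the extension mechanism the paper records separately as Lemma \ref{lema genericity})---is essentially the standard argument from that reference, and your observation that the extension can only be arranged locally near $x_0$ rather than on all of $\hat{V}$ is the correct reading of ``extends'' here, since only the linearization $(df)_{x_0}$ is used downstream.
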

\begin{proof}
See \cite{Abelian Hopf}.
\end{proof}
\begin{lema}\label{lema diffeomorphism}
Let $f:\mathbb{R}^n\rightarrow\mathbb{R}^n$ be $\Gamma-$equivariant and let $f(x_0)=0.$ Let $V$ be the center subspace of $(df)_{x_0}.$ Then there exists a $\Gamma-$equivariant diffeomorphism $\psi:\mathbb{R}^n\rightarrow\mathbb{R}^n$ such that $\psi(x_0)=x_0$ and the center manifold of the transformed vector field
$$\psi_*f(x)\equiv(d\psi)^{-1}_{\psi(x)}f(\psi(x))$$
is $\hat{V}.$
\end{lema}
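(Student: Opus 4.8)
The plan is to conjugate $f$ by a diffeomorphism that straightens a symmetric center manifold onto the affine subspace $\hat V$. Note first that $\psi$ conjugates the two flows: if $x(t)$ solves $\dot x=\psi_*f(x)$ then $y(t)=\psi(x(t))$ solves $\dot y=f(y)$, so $\psi\circ\phi^{\psi_*f}_t=\phi^f_t\circ\psi$. Hence, if $W^c$ is a center manifold of $f$ at $x_0$, then $\psi^{-1}(W^c)$ is a center manifold of $\psi_*f$ at $x_0$. It therefore suffices to build a $\Gamma$-equivariant diffeomorphism $\psi$ with $\psi(x_0)=x_0$, $(d\psi)_{x_0}=\mathrm{id}$, and $\psi(\hat V)=W^c$ on a neighbourhood of $x_0$: the second condition forces $(d(\psi_*f))_{x_0}=(df)_{x_0}$ (using $f(x_0)=0$), so $V$ is again the center subspace and $\hat V=x_0+V$ is tangent to it, while the third makes $\psi^{-1}(W^c)=\hat V$ near $x_0$.

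First I would assemble the data to be straightened. Differentiating $f(\sigma x)=\sigma f(x)$ at $x_0$ shows that every $\sigma\in\Sigma_{x_0}$ commutes with $(df)_{x_0}$, so $\Sigma_{x_0}$ preserves its center subspace $V$; averaging an inner product over the finite group $\Sigma_{x_0}$ gives an invariant complement $W$ with $\mathbb{R}^n=V\oplus W$. The symmetric version of the center manifold theorem then yields a $\Sigma_{x_0}$-invariant center manifold which, near $x_0$, is the graph $W^c=\{x_0+v+h(v):v\in V\}$ of a smooth $\Sigma_{x_0}$-equivariant map $h:V\to W$ with $h(0)=0$ and $(dh)_0=0$. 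Setting
$$\psi_0(x_0+v+w)=x_0+v+w+h(v)\qquad(v\in V,\ w\in W)$$
gives an $\Sigma_{x_0}$-equivariant local diffeomorphism fixing $x_0$, with $(d\psi_0)_{x_0}=\mathrm{id}$ (because $(dh)_0=0$), which carries the slice $\{w=0\}$ of $\hat V$ onto $W^c$.

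The main obstacle, and the heart of the argument, is to upgrade this local, $\Sigma_{x_0}$-equivariant straightening to a global, $\Gamma$-equivariant diffeomorphism. I would first localize using a $\Sigma_{x_0}$-invariant cut-off $\beta$ equal to $1$ near $x_0$ and supported in a small $\Sigma_{x_0}$-invariant ball $B$, replacing $\psi_0$ by $\tilde\psi_0=\mathrm{id}+\beta\,(\psi_0-\mathrm{id})$. Since $(\psi_0-\mathrm{id})(x_0)=0$ and $(d(\psi_0-\mathrm{id}))_{x_0}=0$, the perturbation is $C^1$-small once $B$ is small, so $\tilde\psi_0$ is a diffeomorphism with $\tilde\psi_0(B)=B$, equal to the identity off $B$ and to $\psi_0$ near $x_0$, still $\Sigma_{x_0}$-equivariant and fixing $x_0$ with identity differential. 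I would then spread $\tilde\psi_0$ over the finite orbit $\Gamma x_0$: shrinking $B$ so that $\gamma_1 B$ and $\gamma_2 B$ are disjoint whenever $\gamma_1 x_0\neq\gamma_2 x_0$, set
$$\psi(x)=\gamma\,\tilde\psi_0(\gamma^{-1}x)\ \text{ for }x\in\gamma B,\qquad \psi(x)=x\ \text{ otherwise.}$$
The delicate point is well-definedness: if $x$ lies in both $\gamma_1 B$ and $\gamma_2 B$ then $\gamma_1 x_0=\gamma_2 x_0$, so $\sigma:=\gamma_2^{-1}\gamma_1\in\Sigma_{x_0}$, and the $\Sigma_{x_0}$-equivariance of $\tilde\psi_0$ forces $\gamma_1\tilde\psi_0(\gamma_1^{-1}x)=\gamma_2\tilde\psi_0(\gamma_2^{-1}x)$; smooth matching with the identity off the balls is automatic since $\tilde\psi_0=\mathrm{id}$ near $\partial B$. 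The resulting $\psi$ is smooth, is a diffeomorphism (it permutes each ball via $\tilde\psi_0(B)=B$ and is the identity elsewhere), satisfies $\psi(\eta x)=\eta\psi(x)$ for all $\eta\in\Gamma$ because the balls are permuted by $\Gamma$, fixes $x_0$ with $(d\psi)_{x_0}=\mathrm{id}$, and agrees with $\psi_0$ near $x_0$ so that $\psi(\hat V)=W^c$ locally. By the first paragraph, the center manifold of $\psi_*f$ is then $\hat V$, as required.
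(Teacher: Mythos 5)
Your argument is correct. Note that the paper itself gives no proof of this lemma---it simply defers to the cited reference of Filipski and Golubitsky---and your construction (an invariant complement by averaging, a $\Sigma_{x_0}$-invariant center manifold written as a graph $h:V\rightarrow W$, the shear $x_0+v+w\mapsto x_0+v+w+h(v)$ straightening it onto $\hat{V}$, followed by a cut-off and equivariant spreading over the finite orbit $\Gamma x_0$) is essentially the standard argument used there, so there is nothing further to compare.
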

\begin{proof}
See \cite{Abelian Hopf}.
\end{proof}
In order to state the Cyclic Hopf theorem, we need first the following lemma.
\begin{lema}
The group $\Gamma$ is cyclic if and only it is a homomorphic image of $\mathbb{Z}.$
\end{lema}
\begin{proof}
To show that $\Gamma$ is cyclic if and only if it is a homomorphic image of $\mathbb{Z},$ let $\Gamma=\langle a\rangle$ then the map
$$\mathbb{Z}\rightarrow\Gamma,~n\rightarrow a^n$$
is a homomorphism (since $a^{n+m}=a^na^m$ for all $n, m\in\mathbb{Z})$ whose image is $\Gamma.$\\
Conversely, if $f:\mathbb{Z}\rightarrow\Gamma$ is an epimorphism then let $a=f(1).$ Every $\gamma\in\Gamma$ takes the form $\gamma=f(n)$ for some $n\in\mathbb{Z}.$ If $n\geqslant0$ then\\
$$\gamma=f(1+\ldots +1)=f(1)\circ_{\Gamma}\dots\circ_{\Gamma }f(1)=\left(f(1)\right)^n=a^n.$$\\
The same formula holds if $n<0.$ Thus $\Gamma=\langle a\rangle.$
\end{proof}

\begin{priteo}\label{teorema cyclic Hopf}
(cyclic Hopf theorem). In systems with finite cyclic symmetry, generically, Hopf bifurcation at a point $x_0$ occurs with simple eigenvalues, and there exists a unique branch of small-amplitude periodic solutions emanating from $x_0.$ Moreover the spatio-temporal symmetries of the bifurcating periodic solutions are
\begin{equation}
H=\Sigma_{x_0},~H~is~cyclic
\end{equation}
and
\begin{equation}
K=\mathrm{ker}_V(H),~K~is~cyclic,
\end{equation}
and $H$ acts $H-$simply on $V.$ In addition let $\mathbb{Z}_k$ act on $\mathbb{R}^k$ by a cyclic permutation of coordinates. Let $\mathbb{Z}_q\subseteq\mathbb{Z}_n\subseteq\mathbb{Z}_k$. Then there is a $\mathbb{Z}_n-$simple representation with kernel $\mathbb{Z}_q$ with the single exception when $n=k$ is even and $q=\frac{k}{2}.$
\end{priteo}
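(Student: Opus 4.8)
The plan is to treat the three assertions in turn, reducing each to the representation theory of a finite cyclic group. First I would set up the bifurcation on a center subspace: by Lemma \ref{lema diffeomorphism} I may assume the center subspace $V$ of $(df)_{x_0}$ is invariant, and by Lemma \ref{lema center subspace} it suffices to analyze the $\Sigma_{x_0}$-action on $V$. The key structural input is that a finite cyclic group has only two kinds of real irreducible representations: the one-dimensional \emph{absolutely irreducible} ones (the trivial character and, when the order is even, the sign character) and the two-dimensional non-absolutely-irreducible (complex, rotational) ones. Hence every $\Gamma$-simple representation is two-dimensional, and it is of complex type exactly when the crossing eigenvalues form a single conjugate pair. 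Since a double crossing requires an extra algebraic condition on the family, it is non-generic; generically $V$ is a single two-dimensional complex-type summand and the eigenvalues $\pm i\omega$ are simple. On this $V$ the Hopf phase circle $\mathbb{S}^1$ and the cyclic action commute, so the classical Hopf Theorem in Birkhoff normal form yields a unique branch of small-amplitude periodic solutions. Putting $H=\Sigma_{x_0}$ and $K=\ker_V(H)$, both are subgroups of the cyclic group $\Gamma$ and therefore cyclic, and $H$ acts $H$-simply on $V$ by construction.

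For the final, representation-theoretic assertion I would argue inside the permutation module $\mathbb{R}^k$. Complexifying, $\mathbb{C}^k=\bigoplus_{j=0}^{k-1}\mathbb{C}_j$, where the generator of $\mathbb{Z}_k$ acts on $\mathbb{C}_j$ by $\omega_k^{\,j}$ with $\omega_k=e^{2\pi i/k}$. Restricting to the subgroup $\mathbb{Z}_n=\langle g^{k/n}\rangle$, the summand $\mathbb{C}_j$ carries the $\mathbb{Z}_n$-character $\psi_{\,j\bmod n}$, where $\psi_\ell$ sends the generator of $\mathbb{Z}_n$ to $\omega_n^{\,\ell}$. Because $n\mid k$, each residue $\ell\in\{0,\dots,n-1\}$ is attained by exactly $k/n$ of the indices $j$; equivalently, counting $\mathbb{Z}_n$-orbits on the $k$ coordinates, every real irreducible of $\mathbb{Z}_n$ occurs in $\mathbb{R}^k$ with multiplicity exactly $k/n$. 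The character $\psi_\ell$ has kernel the unique subgroup of order $\gcd(\ell,n)$, so a single complex-type summand $\psi_\ell$ (with $\ell\neq 0,\tfrac n2$) realizes a two-dimensional $\mathbb{Z}_n$-simple subspace whose kernel is $\mathbb{Z}_{\gcd(\ell,n)}$.

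With this dictionary the construction is immediate in the generic case. Given $\mathbb{Z}_q\subsetneq\mathbb{Z}_n\subseteq\mathbb{Z}_k$ with $q\mid n\mid k$ and $q\neq\tfrac n2$, I would take $\ell=q$: then $\gcd(q,n)=q$ and $\ell=q\notin\{0,\tfrac n2\}$, so the complex-type summand $\psi_q$—present since its multiplicity $k/n\ge 1$—is a $\mathbb{Z}_n$-simple subspace of $\mathbb{R}^k$ with kernel exactly $\mathbb{Z}_q$. This is precisely the complex-type, simple-eigenvalue center subspace predicted by the first part of the theorem, so the pair $(H,K)=(\mathbb{Z}_n,\mathbb{Z}_q)$ is realized by a genuine Hopf branch.

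The single exception, and the main obstacle, is $q=\tfrac n2$. The obstruction is one of representation \emph{type}: $\gcd(\ell,n)=\tfrac n2$ forces $\ell=\tfrac n2$, i.e.\ the sign character, which is an absolutely irreducible one-dimensional real representation. Thus the kernel $\mathbb{Z}_{n/2}$ can never arise from a single complex-type summand; it can come only from the sign representation, and $\Gamma$-simplicity of an absolutely irreducible forces \emph{two} copies. By the multiplicity count, two copies of the sign representation embed in $\mathbb{R}^k$ iff $k/n\ge 2$, i.e.\ iff $n<k$. Hence kernel $\mathbb{Z}_{n/2}$ is realizable except when $n=k$, which is exactly the stated exception $n=k$ (even), $q=k/2$. (The degenerate value $q=n$ gives the trivial action on $V$ and a double, non-rotational eigenvalue; it lies outside the generic simple-eigenvalue bifurcation of the first part and is excluded.) The delicate point throughout is this coupling between the irreducible's type—deciding whether one or two isotypic copies are required—and the exact multiplicity $k/n$ furnished by the permutation module; the remainder is character bookkeeping and divisibility.
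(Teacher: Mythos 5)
Your proposal is essentially correct, and for the first half it follows the same route as the paper: reduce to the center subspace via Lemmas \ref{lema center subspace} and \ref{lema diffeomorphism}, invoke $\Gamma$-simplicity of the center subspace at a Hopf point, and use the fact that the real irreducibles of a finite cyclic group are either one-dimensional absolutely irreducible or two-dimensional of complex type to conclude that $\dim V=2$, the eigenvalues are simple, and the branch is unique. The genuine difference is in the final representation-theoretic assertion: the paper disposes of it in one line by citing Proposition 6.2 of \cite{Abelian Hopf}, whereas you prove it from scratch by decomposing the complexified permutation module $\mathbb{C}^k=\bigoplus_{j}\mathbb{C}_j$, restricting characters to $\mathbb{Z}_n$, noting that each $\mathbb{Z}_n$-character occurs with multiplicity exactly $k/n$, identifying the kernel of $\psi_\ell$ as $\mathbb{Z}_{\gcd(\ell,n)}$, and observing that kernel $\mathbb{Z}_{n/2}$ can only arise from the sign character, which is absolutely irreducible and therefore needs two copies, i.e.\ $k/n\geq 2$, i.e.\ $n<k$ --- exactly reproducing the stated exception. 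That self-contained argument is a real gain over the paper's citation. The one place where the paper does more than you do is the identification $H=\Sigma_{x_0}$: since $H$ is by definition the spatio-temporal symmetry group of the bifurcating solution, writing ``Putting $H=\Sigma_{x_0}$'' assumes what must be shown. The paper proves both inclusions --- $H\subseteq\Sigma_{x_0}$ by continuity, since $hx_0=\lim_{\lambda\to 0}hx(0,\lambda)=\lim_{\lambda\to 0}x(\Phi(h),\lambda)=x_0$, and $\Sigma_{x_0}\subseteq H$ because any $\gamma\in\Sigma_{x_0}$ carries the branch to a branch through $x_0$, which by uniqueness is the same branch --- and you should insert these two short steps; with them your argument is complete.
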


\begin{proof}
The proof relies on the proof of the homologous Theorem in \cite{Abelian Hopf}, with changes concerning the form of the subgroups $H$ and $K.$ However, we will prefer to give the proof entirely, including the parts that coincide with the proof in \cite{Abelian Hopf}, to easy the lecture of the paper.
We begin as in \cite{Abelian Hopf}, by showing that the equivariant Hopf bifurcation leads to a unique branch of small-amplitude periodic solutions emanating from $x_0.$ From Lemma \ref{lema center subspace} it follows that the bifurcation point $x_0=0$ and therefore $\Gamma=\Sigma_{x_0}.$ Moreover, from Lemma \ref{lema diffeomorphism} it follows that if reducing to the center manifold, we may assume that $\mathbb{R}^n=V$ and therefore from \cite{GS88} it follows that the center subspace $V$ at the Hopf bifurcation point is $\Gamma-$simple. This means that $V$ is either a direct sum of two absolutely irreducible representations or it is itself irreducible but not absolutely irreducible.
Since the irreducible representations of abelian groups (and subsequently cyclic groups) are one-dimensional and absolutely irreducible or two-dimensional and non-absolutely irreducible, it follows that $V$ is two-dimensional and therefore the eigenvalues obtained at the linearization about the bifurcation point $x_0$ are simple. Now the standard Hopf bifurcation theorem applies to obtain a unique branch of periodic solutions.

Let $x(t,\lambda)$ be the unique branch of small-amplitude periodic solutions that emanate at the Hopf bifurcation point $x_0.$ For each $t,$
$$x_0=\lim_{\lambda\rightarrow0}x(t,\lambda).$$
Let $H$ be the spatio-temporal symmetry subgroup of $x(\cdot,\lambda),$ and let $\Phi:H\rightarrow\mathbb{S}^1$ be the homomorphism that associates a symmetry $h\in H$ with a phase shift $\Phi(h)\in\mathbb{S}^1.$ To prove that $H\subset\Sigma_{x_0}$ we have
\begin{align}\label{proof theorem Dp Hopf eq1}
\nonumber
hx_0&=\lim_{\lambda\rightarrow0}hx(0,\lambda)\hspace{0.9cm}\mathrm{by~continuity~of}~h\\
\nonumber
\\
\nonumber
&=\lim_{\lambda\rightarrow0}x(\Phi(h),\lambda)\hspace{0.5cm}\mathrm{by~definition~of~spatio-temporal~symmetries}\\
\nonumber
\\
\nonumber
&=x_0
\nonumber
\end{align}
and therefore $h\in\Sigma_{x_0}.$
In the following we proof that $\Sigma_{x_0}\subset H.$ Let $\gamma\in\Sigma_{x_0}\subseteq\Gamma;$ therefore $\gamma x(t,\lambda)$ is also a periodic solution. Since the periodic is unique (as shown above), we have
$$\gamma\{x(t,\lambda)\}=\{x(t,\lambda)\},$$
so $\gamma\in H.$
Lemma \ref{lema diffeomorphism} allows us to assume that the center manifold at $x_0$ is $\hat{V}=v+x_0,$ which may be identified with $V,$ and therefore $V$ is $H-$invariant. Therefore $V$ is $H-$simple since $\gamma$ is cyclic (and subsequently abelian).
Since $\Gamma$ is cyclic, all its subgroups are cyclic, in particular $H$ and $K.$\\
The proof of the last condition is the proof of Proposition 6.2 in \cite{Abelian Hopf}.
\end{proof}

\section{Constructing systems with cyclic symmetry near Hopf points}\label{section constructing systems}
This section consists in recalling the results corresponding section $4$ in \cite{Abelian Hopf} where the construction of systems with abelian symmetry near Hopf points has been carried out.
When $\Gamma$ is finite cyclic, a key step in constructing H mod K periodic solutions from Hopf bifurcation at $x_0$ is the construction
of a locally $\Sigma_{x_0}-$equivariant vector field.
We first construct, for finite symmetry groups, a $\Gamma-$equivariant vector field that has a
stable equilibrium, $x_0\in\mathbb{R}^n$, with the desired isotropy.
We will use
\begin{lema}\label{lema tisme}
For any finite set of distinct points $y_1,\ldots,y_l,$ vectors $v_1,\ldots,v_l$ in $\mathbb{R}^n$ and matrices $A_1,\ldots,A_l\in \mathrm{GL}(n),$ there exists a polynomial map $g:\mathbb{R}^n\rightarrow\mathbb{R}^n$ such that $g(y_j)=v_j$ and $(dg)_{y_j}=A_j.$
\end{lema}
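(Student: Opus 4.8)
The plan is to recognize this as a multivariate Hermite interpolation problem and solve it explicitly by building a suitable polynomial basis. Writing $g=(g_1,\dots,g_n)$, the requirement $g(y_j)=v_j$ prescribes the value $(v_j)_m$ of the $m$-th component $g_m$ at $y_j$, while $(dg)_{y_j}=A_j$ prescribes $\nabla g_m(y_j)$ to be the $m$-th row of $A_j$. Hence the $n$ components decouple, and it suffices to prove the scalar statement: given distinct points $y_1,\dots,y_l$, scalars $c_1,\dots,c_l$ and vectors $w_1,\dots,w_l\in\mathbb{R}^n$, there is a polynomial $q:\mathbb{R}^n\to\mathbb{R}$ with $q(y_j)=c_j$ and $\nabla q(y_j)=w_j$. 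Note that the hypothesis $A_j\in\mathrm{GL}(n)$ plays no role in the interpolation itself; any matrices would do, and invertibility is relevant only to the dynamical application that follows.

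First I would construct Lagrange-type polynomials. Since the $y_j$ are distinct, for each pair $i\neq j$ the vector $a_{ij}=y_j-y_i$ is nonzero, so the affine function
$$\ell_{ij}(x)=\frac{\langle a_{ij},x-y_i\rangle}{\langle a_{ij},y_j-y_i\rangle}$$
is well defined and satisfies $\ell_{ij}(y_i)=0$ and $\ell_{ij}(y_j)=1$. Setting $L_j=\prod_{i\neq j}\ell_{ij}$ then yields a polynomial with $L_j(y_i)=\delta_{ij}$.

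Next I would assemble a Hermite basis from the $L_j$, using the squaring trick to decouple the first-order data across distinct points. Define the value-functions
$$H_j(x)=L_j(x)^2\bigl[1-2\langle\nabla L_j(y_j),x-y_j\rangle\bigr]$$
and, for each coordinate $k$, the gradient-functions
$$\tilde H_{j,k}(x)=L_j(x)^2\,(x_k-(y_j)_k).$$
Because $L_j(y_i)=0$ for $i\neq j$, and every term above carries a factor $L_j^2$ (or $L_j^2$ times a linear factor that vanishes at $y_j$), a direct check gives $H_j(y_i)=\delta_{ij}$, $\nabla H_j(y_i)=0$, $\tilde H_{j,k}(y_i)=0$ and $\nabla\tilde H_{j,k}(y_i)=\delta_{ij}e_k$. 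The only computation requiring care is the cancellation at the base point, $\nabla H_j(y_j)=2\nabla L_j(y_j)-2\nabla L_j(y_j)=0$, which is precisely what the correction bracket was engineered to produce.

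Finally, writing $w_j=\sum_k w_{j,k}e_k$, the polynomial
$$q=\sum_{j}c_j H_j+\sum_{j,k}w_{j,k}\tilde H_{j,k}$$
satisfies $q(y_i)=c_i$ and $\nabla q(y_i)=w_i$ by the interpolation identities above. Carrying this out for each component, with $c_j=(v_j)_m$ and $w_j$ the $m$-th row of $A_j$, and stacking the resulting scalar polynomials into $g=(g_1,\dots,g_n)$, produces the required map. The crux of the argument --- and the only place where distinctness of the $y_j$ is genuinely used --- is the \emph{decoupling} of the derivative conditions, achieved by squaring the Lagrange factors so that both $L_j^2$ and its gradient vanish at every $y_i$ with $i\neq j$; everything else is bookkeeping.
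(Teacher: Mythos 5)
Your proof is correct. The paper itself gives no argument for this lemma --- it simply cites Lancaster and Tismenetsky's book on matrix theory --- so your explicit multivariate Hermite interpolation is a genuinely self-contained alternative. Every step checks out: the componentwise decoupling is valid because the $m$-th row of $(dg)_{y_j}$ is exactly $\nabla g_m(y_j)$; the affine factors $\ell_{ij}$ are well defined since $\langle y_j-y_i,\,y_j-y_i\rangle\neq 0$ for distinct points; the squaring of $L_j$ kills both the value and the gradient at every $y_i$ with $i\neq j$; the correction bracket in $H_j$ produces the cancellation $\nabla H_j(y_j)=2\nabla L_j(y_j)-2\nabla L_j(y_j)=0$; and $\tilde H_{j,k}$ contributes $e_k$ to the gradient at $y_j$ and nothing anywhere else. (The degenerate case $l=1$ also works, with the empty product giving $L_1\equiv 1$.) Your observation that invertibility of the $A_j$ is irrelevant to the interpolation is also accurate --- the statement holds for arbitrary matrices, and the $\mathrm{GL}(n)$ hypothesis matters only for the dynamical constructions downstream. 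What your approach buys over the paper's bare citation is an elementary, constructive, and verifiable proof; what it costs is length, which is presumably why the original authors outsourced it.
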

\begin{proof}
See \cite{Tisme}.
\end{proof}
\begin{priteo}
Let $\Gamma$ be a finite cyclic group acting on $\mathbb{R}^n$ and $x_0\in\mathbb{R}^n.$ Then there exists a $\Gamma-$equivariant system of ODEs on $\mathbb{R}^n$ with a stable equilibrium $x_0.$
\end{priteo}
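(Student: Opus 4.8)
The plan is to produce the desired vector field by interpolating prescribed jet data along the group orbit of $x_0$ and then symmetrising. First I would consider the orbit $\Gamma x_0=\{\sigma x_0:\sigma\in\Gamma\}$, which is a finite set of distinct points because $\Gamma$ is finite; write it as $y_1,\ldots,y_l$ with $y_1=x_0$. The idea is to force every orbit point to be a stable equilibrium of an auxiliary map, and then average over $\Gamma$ to gain equivariance while keeping $x_0$ fixed as a stable equilibrium.

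Concretely, I would apply Lemma \ref{lema tisme} to the points $y_1,\ldots,y_l$ with the choices $v_j=0$ and $A_j=-I$ for every $j$; note $-I\in\mathrm{GL}(n)$, so the hypotheses are met. This yields a polynomial map $g:\mathbb{R}^n\to\mathbb{R}^n$ with $g(y_j)=0$ and $(dg)_{y_j}=-I$ at each orbit point. The map $g$ need not be $\Gamma$-equivariant, so next I would symmetrise it by setting
$$f(x)=\frac{1}{|\Gamma|}\sum_{\sigma\in\Gamma}\sigma^{-1}g(\sigma x).$$
A routine change of summation variable $\sigma\mapsto\sigma\tau$ shows $f(\tau x)=\tau f(x)$ for all $\tau\in\Gamma$, so $f$ is $\Gamma$-equivariant and the system $\dot x=f(x)$ is a $\Gamma$-equivariant system of ODEs on $\mathbb{R}^n$.

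It then remains to verify the two dynamical properties. Since each $\sigma x_0$ lies in the orbit and hence equals some $y_j$ at which $g$ vanishes, one gets $f(x_0)=\frac{1}{|\Gamma|}\sum_{\sigma}\sigma^{-1}g(\sigma x_0)=0$, so $x_0$ is an equilibrium. Differentiating term by term via the chain rule gives $(df)_{x_0}=\frac{1}{|\Gamma|}\sum_{\sigma}\sigma^{-1}(dg)_{\sigma x_0}\sigma$, and because $(dg)_{\sigma x_0}=-I$ for every $\sigma$ each summand reduces to $\sigma^{-1}(-I)\sigma=-I$, whence $(df)_{x_0}=-I$. As $-I$ has all eigenvalues equal to $-1$, the equilibrium $x_0$ is linearly asymptotically stable, which is exactly the claimed stability.

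The construction is essentially forced once Lemma \ref{lema tisme} is available; the only point requiring care, and the step I expect to be the main obstacle, is recognising that the interpolation data must be prescribed on the \emph{entire} orbit $\Gamma x_0$ rather than at $x_0$ alone. This is what guarantees that the averaging operator preserves both $f(x_0)=0$ and the manifestly stable value $(df)_{x_0}=-I$; prescribing $A_j=-I$ uniformly across the orbit is precisely what makes the symmetrised linearisation commute through the sum and collapse to $-I$.
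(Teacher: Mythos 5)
Your construction is correct and is essentially the proof the paper intends: the paper defers to the Abelian Hopf reference, whose argument is exactly this use of Lemma \ref{lema tisme} on the orbit $\Gamma x_0$ with data $v_j=0$, $A_j=-I$, followed by averaging over $\Gamma$ to obtain equivariance while preserving $f(x_0)=0$ and $(df)_{x_0}=-I$. No gaps.
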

\begin{proof}
See \cite{Abelian Hopf}.
\end{proof}

In conclusion any point $x_0\in\mathbb{R}^n$ can be a stable equilibrium for a $\Gamma-$equivariant vector field $f:\mathbb{R}^n\rightarrow\mathbb{R}^n.$ It is clear that $(df)_{x_0}$ must commute with the isotropy subgroup $\Sigma_{x_0}$ of $x_0$ \cite{GS03}. The following result states that the linearization about the equilibrium $x_0$ can be any linear map that commutes with the isotropy subgroup.
\begin{priteo}\label{teorema polynomial equivariant}
Let $x_0\in\mathbb{R}^n$ and $A:\mathbb{R}^n\rightarrow\mathbb{R}^n$ be a linear map that commutes with the isotropy subgroup $\Sigma_{x_0}$ of $x_0.$ Then there exists a polynomial $\Gamma-$equivariant vector field $f:\mathbb{R}^n\rightarrow\mathbb{R}^n$ such that $f(x_0)=0$ and $(df)_{x_0}=A.$
\end{priteo}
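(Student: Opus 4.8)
The plan is to obtain $f$ by averaging a suitably chosen polynomial over $\Gamma$, so that $\Gamma$-equivariance comes for free while the prescribed value and Jacobian at $x_0$ survive the averaging. Let $\Gamma x_0=\{y_1,\dots,y_l\}$ with $y_1=x_0$ be the orbit of $x_0$, and fix coset representatives $\gamma_1=e,\gamma_2,\dots,\gamma_l\in\Gamma$ with $y_j=\gamma_j x_0$, so that $l=|\Gamma|/|\Sigma_{x_0}|$. Since $\Gamma$ acts linearly on $\mathbb{R}^n$, for any polynomial map $g:\mathbb{R}^n\to\mathbb{R}^n$ the Reynolds average
\begin{equation}\label{eq proposal reynolds}
f(x)=\frac{1}{|\Gamma|}\sum_{\gamma\in\Gamma}\gamma^{-1}g(\gamma x)
\end{equation}
is again a polynomial map, and it is $\Gamma$-equivariant: replacing $\gamma$ by $\gamma\delta$ in the sum gives $f(\delta x)=\delta f(x)$ for every $\delta\in\Gamma$. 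It therefore suffices to choose $g$ so that $f(x_0)=0$ and $(df)_{x_0}=A$.

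First I would apply Lemma \ref{lema tisme} to the $l$ distinct orbit points to produce a polynomial $g$ with $g(y_j)=0$ and $(dg)_{y_j}=\gamma_j A\gamma_j^{-1}$ for every $j$. Because Lemma \ref{lema tisme} prescribes only Jacobians lying in $\mathrm{GL}(n)$, and neither $A$ nor its conjugates need be invertible, I would fix a scalar $c\neq0$ with $A+cI$ invertible and split $g=g_1-g_2$, where $g_1,g_2$ are supplied by Lemma \ref{lema tisme} with all orbit values equal to $0$ and with Jacobians $(dg_1)_{y_j}=\gamma_j(A+cI)\gamma_j^{-1}$ and $(dg_2)_{y_j}=cI$. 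Both families now lie in $\mathrm{GL}(n)$, and the difference restores $g(y_j)=0$ and $(dg)_{y_j}=\gamma_j A\gamma_j^{-1}$.

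Finally I would verify the two conditions at $x_0$. Evaluating \eqref{eq proposal reynolds} gives $f(x_0)=\frac{1}{|\Gamma|}\sum_\gamma\gamma^{-1}g(\gamma x_0)=0$, since $\gamma x_0$ is always an orbit point and $g$ vanishes on the orbit. Differentiating \eqref{eq proposal reynolds} and using the chain rule together with the linearity of the action yields $(df)_{x_0}=\frac{1}{|\Gamma|}\sum_\gamma\gamma^{-1}(dg)_{\gamma x_0}\gamma$. Writing each $\gamma$ uniquely as $\gamma=\gamma_j\sigma$ with $\sigma\in\Sigma_{x_0}$, we get $\gamma x_0=y_j$, hence $(dg)_{\gamma x_0}=\gamma_j A\gamma_j^{-1}$ and $\gamma^{-1}(dg)_{\gamma x_0}\gamma=\sigma^{-1}A\sigma$.

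The decisive step — and the only place the commutativity hypothesis is used — is this last identity: the hypothesis that $A$ commutes with $\Sigma_{x_0}$ forces $\sigma^{-1}A\sigma=A$, so every one of the $|\Gamma|$ summands equals $A$ and $(df)_{x_0}=A$, as required. The substantive content is thus that averaging spreads the Jacobian data over the orbit, and it is precisely the relation $\sigma A=A\sigma$ for $\sigma\in\Sigma_{x_0}$ that lets the isotropy-twisted contributions recombine into $A$ rather than into some symmetrization of it; this is exactly the necessary condition noted before the statement. By comparison the invertibility requirement in Lemma \ref{lema tisme} is only a technical nuisance, handled by the $g_1-g_2$ splitting.
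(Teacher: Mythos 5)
Your proposal is correct, and it reconstructs essentially the argument the paper relies on: the paper defers this proof to \cite{Abelian Hopf}, where one likewise interpolates the data $g(\gamma x_0)=0$, $(dg)_{\gamma x_0}=\gamma A\gamma^{-1}$ over the orbit via Lemma \ref{lema tisme} and then applies the Reynolds average, with the commutativity of $A$ and $\Sigma_{x_0}$ entering exactly where you place it, namely in $\sigma^{-1}A\sigma=A$. Your $g_1-g_2$ splitting to respect the $\mathrm{GL}(n)$ hypothesis of Lemma \ref{lema tisme} is a minor but legitimate extra care.
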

\begin{proof}
See \cite{Abelian Hopf}.
\end{proof}
When constructing a Hopf bifurcation at points $x_0\in\mathbb{R}^n$ we do not necessarily assume full isotropy. Genericity of $\Sigma_{x_0}-$simple subspaces at points of Hopf bifurcation is given by $\Gamma-$equivariant mappings as follows.
\begin{lema}\label{lema genericity}
Let $\Gamma$ act on $\mathbb{R}^n$ and fix $x_0\in\mathbb{R}^n.$ Let $V$ be a $\Sigma_{x_0}-$invariant neigborhood of $x_0$ such that $\gamma\bar{V}\cap\bar{V}=\emptyset$ for any $\gamma\in\Gamma\backslash\Sigma_{x_0}.$ Let $g:\bar{V}\times\mathbb{R}\rightarrow\mathbb{R}^n$ be a smooth $\Sigma_{x_0}-$equivariant vector field. Then there exists an extension of $g$ to a smooth $\Gamma-$equivariant vector field $f:\mathbb{R}^n\times\mathbb{R}\rightarrow\mathbb{R}^n.$
\end{lema}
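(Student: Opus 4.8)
The plan is to build the extension by an averaging (transfer) construction over the finite group $\Gamma$, after first promoting $g$ to a globally defined $\Sigma_{x_0}$-equivariant vector field supported near $\bar{V}$. Throughout, $\Gamma$ acts only on the spatial variable in $\mathbb{R}^n$ and carries the parameter $\lambda\in\mathbb{R}$ along trivially.

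First I would enlarge $\bar{V}$ slightly. Since $\Gamma$ is finite and $\gamma\bar{V}\cap\bar{V}=\emptyset$ for every $\gamma\in\Gamma\setminus\Sigma_{x_0}$, only finitely many translates must be separated, so one can choose a $\Sigma_{x_0}$-invariant open set $W$ with $\bar{V}\subset W$ and still $\gamma W\cap W=\emptyset$ for all $\gamma\notin\Sigma_{x_0}$ (replacing any candidate by the intersection of its $\Sigma_{x_0}$-translates makes it invariant without enlarging it, hence without destroying disjointness). Using a Whitney-type smooth extension I would extend $g(\cdot,\lambda)$ from the closed set $\bar{V}$ to a smooth vector field on $W$, multiply by a smooth $\Sigma_{x_0}$-invariant bump function $\phi$ equal to $1$ on a neighborhood of $\bar{V}$ and supported in $W$, and finally average the product over $\Sigma_{x_0}$. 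The outcome is a smooth $\Sigma_{x_0}$-equivariant vector field $h:\mathbb{R}^n\times\mathbb{R}\to\mathbb{R}^n$, supported in $W$, with $h=g$ on $\bar{V}\times\mathbb{R}$; the averaging does not alter $h$ on $\bar{V}$ precisely because $\phi\equiv1$ there and $g$ is already $\Sigma_{x_0}$-equivariant.

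The second step is the transfer. Summing over the whole group and dividing, I would set
\[
f(y,\lambda)=\frac{1}{|\Sigma_{x_0}|}\sum_{\gamma\in\Gamma}\gamma\,h(\gamma^{-1}y,\lambda).
\]
This is a finite sum of smooth maps, hence smooth, and it is $\Gamma$-equivariant by the usual reindexing $\gamma\mapsto\delta\gamma$, which gives $f(\delta y,\lambda)=\dots=\delta f(y,\lambda)$. To see that $f$ genuinely extends $g$, I would restrict to $y\in\bar{V}$: because $\bar{V}\subset W$ and the translates $\gamma W$ are pairwise disjoint off $\Sigma_{x_0}$, the only surviving terms are those with $\gamma\in\Sigma_{x_0}$, and for these the $\Sigma_{x_0}$-equivariance of $h$ yields $\gamma h(\gamma^{-1}y,\lambda)=h(y,\lambda)=g(y,\lambda)$; summing the $|\Sigma_{x_0}|$ equal terms and dividing recovers $g(y,\lambda)$.

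The part I expect to require the most care is the very first reduction, namely producing the invariant separating neighborhood $W$ together with a compatible bump function $\phi$ and a smooth (non-equivariant) extension of $g$ off the closed set $\bar{V}$. Once $h$ is in hand, the equivariance of $f$ is automatic from the averaging and the identity $f|_{\bar{V}}=g$ is forced by the disjointness hypothesis, so the real content lies in this local $\Sigma_{x_0}$-equivariant extension step, which is exactly where the hypotheses on $V$ are used. When $\bar{V}$ is compact—as in the applications to Hopf bifurcation at $x_0$—the positive distance between $\bar{V}$ and its translates makes the construction of $W$ and $\phi$ routine.
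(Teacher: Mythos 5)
Your argument is correct: the transfer construction $f(y,\lambda)=\tfrac{1}{|\Sigma_{x_0}|}\sum_{\gamma\in\Gamma}\gamma\,h(\gamma^{-1}y,\lambda)$ applied to a cut-off extension supported in an invariant separating neighborhood $W$ is exactly the standard way to prove this lemma, and the paper itself offers no independent proof (it only cites the Abelian Hopf reference, where essentially this construction is carried out). Two minor remarks: the inner averaging of $\phi\cdot g$ over $\Sigma_{x_0}$ is actually redundant, since the outer sum over $\Gamma$ already contains the sum over $\Sigma_{x_0}$ and the equivariance of $g$ on $\bar{V}$ does the rest; and the existence of the invariant open $W$ with $\gamma W\cap W=\emptyset$ does not need compactness of $\bar{V}$ --- normality of $\mathbb{R}^n$ gives, for each of the finitely many $\gamma\notin\Sigma_{x_0}$, disjoint open sets $U\supset\bar{V}$ and $U'\supset\gamma\bar{V}$, and $W_\gamma=U\cap\gamma^{-1}U'$ already satisfies $W_\gamma\cap\gamma W_\gamma=\emptyset$, after which one intersects over $\gamma$ and over the $\Sigma_{x_0}$-translates.
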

\begin{proof}
See \cite{Abelian Hopf}.
\end{proof}

\section{The cyclic Hopf $H~\mathrm{mod}~K$ theorem}
\begin{priteo}
(cyclic Hopf $H~\mathrm{mod}~K$ theorem). Let $\Gamma$ be a finite cyclic group acting on $\mathbb{R}^n.$ There is an $H~\mathrm{mod}~K$ periodic solution that arises by a generic Hopf bifurcation if and only if the following seven conditions hold: Theorem \ref{teorema H mod K} $(a)-(d),$ $H$ is a cyclic isotropy subgroup, there exists an $H-$simple subspace $V$ such that $K=\mathrm{ker}_V(H),$ K is cyclic and let $\mathbb{Z}_k$ act on $\mathbb{R}^k$ by a cyclic permutation of coordinates. Let $\mathbb{Z}_q\subseteq\mathbb{Z}_n\subseteq\mathbb{Z}_k$. Then there is a $\mathbb{Z}_n-$simple representation with kernel $\mathbb{Z}_q$ with the single exception when $n=k$ is even and $q=\frac{k}{2}.$
\end{priteo}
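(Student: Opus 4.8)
The plan is to prove the two implications of the equivalence separately, exploiting the fact that the seven conditions split into the four hypotheses $(a)-(d)$ of the $H~\mathrm{mod}~K$ theorem (Theorem \ref{teorema H mod K}) and the three additional symmetry and representation constraints produced by the cyclic Hopf theorem (Theorem \ref{teorema cyclic Hopf}). For the necessity direction, I would suppose that an $H~\mathrm{mod}~K$ periodic solution $x(t)$ arises from a generic Hopf bifurcation at a point $x_0$ in some $\Gamma$-equivariant system. Since $x(t)$ is in particular a periodic solution with spatial symmetries $K$ and spatio-temporal symmetries $H$, the ``only if'' part of Theorem \ref{teorema H mod K} immediately forces conditions $(a)-(d)$. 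Theorem \ref{teorema cyclic Hopf} then supplies the remaining three conditions directly: $H=\Sigma_{x_0}$ is a cyclic isotropy subgroup, $K=\mathrm{ker}_V(H)$ is cyclic for the center subspace $V$, $V$ is $H$-simple, and the representation of $H$ on $V$ is of the admissible $\mathbb{Z}_n$-simple type with kernel $\mathbb{Z}_q$, including the stated exception. Hence all seven conditions hold.

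For the sufficiency direction, I would assume the seven conditions and build a $\Gamma$-equivariant system realizing the desired bifurcation. Because $H$ is an isotropy subgroup, I can choose $x_0$ with $\Sigma_{x_0}=H$. On the $H$-simple subspace $V$ I would define a local $H$-equivariant vector field whose linearization restricted to $V$ carries a conjugate pair of purely imaginary eigenvalues crossing the imaginary axis transversally; condition five is precisely what guarantees that such a map, commuting with the action of $H/K$ on $V$, exists, since a genuine $\mathbb{Z}_n$-simple rotation-type representation with kernel $\mathbb{Z}_q$ endows $V$ with the complex structure on which $H/K$ acts by rotations. Using the stable-equilibrium construction of Section \ref{section constructing systems} together with Theorem \ref{teorema polynomial equivariant}, which realizes the prescribed linearization as a polynomial $\Gamma$-equivariant field, and Lemma \ref{lema genericity}, which extends the local $H$-equivariant field to a global $\Gamma$-equivariant one, I obtain a $\Gamma$-equivariant system undergoing a generic Hopf bifurcation at $x_0$. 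Theorem \ref{teorema cyclic Hopf} then identifies the bifurcating branch as carrying exactly the spatio-temporal symmetries $(H,K)$, closing the argument.

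The hard part will be the representation-theoretic condition and, in particular, verifying that $n=k$ even with $q=\frac{k}{2}$ is the \emph{unique} obstruction to Hopf-compatibility. Here the plan is to decompose $\mathbb{R}^k$ under the cyclic-permutation action into its real irreducible summands --- the trivial representation, the two-dimensional rotation representations $\rho_j$, and, when $k$ is even, the one-dimensional representation on which the generator acts as $-1$ --- and to compute the kernel of the $\mathbb{Z}_n$-action on each summand. Each rotation summand is non-absolutely irreducible and already carries the complex structure on which $H/K$ acts by rotations, so the kernels realized by rotation summands all support a genuine Hopf bifurcation; reaching $q=\frac{k}{2}$ with $n=k$ would instead require a $\mathbb{Z}_k$-simple representation assembled from two copies of the $-1$ representation, but that representation occurs with multiplicity one in $\mathbb{R}^k$ and cannot be doubled, and in any case the resulting $-I$ action is central and forces real rather than purely imaginary eigenvalues. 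This simultaneously rules out the existence of the required subrepresentation and the genericity of Hopf bifurcation in that case, isolating the stated exception; all the remaining steps are direct applications of the lemmas recalled in Sections \ref{Hopf bifurcation with cyclic symmetries} and \ref{section constructing systems}.
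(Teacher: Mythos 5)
Your proposal follows essentially the same route as the paper: necessity is read off from Theorems \ref{teorema H mod K} and \ref{teorema cyclic Hopf}, and sufficiency is obtained by placing a rotation-type linearization on an $H$-simple subspace at a point $x_0$ with $\Sigma_{x_0}=H$, extending it to a $\Gamma$-equivariant polynomial family via Theorem \ref{teorema polynomial equivariant} and Lemma \ref{lema genericity}, and invoking the equivariant Hopf theorem, while your extra paragraph on the $\mathbb{Z}_n$-simple, kernel-$\mathbb{Z}_q$ condition reproduces what the paper simply delegates to Proposition 6.2 of \cite{Abelian Hopf}. One aside there is inaccurate --- on a sum of two copies of the $-1$ representation the commuting linear maps form all of $\mathrm{GL}(2,\mathbb{R})$, so a central $-I$ action does not by itself force real eigenvalues --- but your primary reason, that this representation occurs with multiplicity one in $\mathbb{R}^k$ and cannot be doubled, is the correct obstruction, so the argument stands.
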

\begin{proof}
Necessity follows from the $H~\mathrm{mod}~K$ theorem (Theorem \ref{teorema H mod K}) and the cyclic Hopf theorem (Theorem \ref{teorema cyclic Hopf}). We'll prove the sufficiency next. The idea of the proof will again, rely heavily on the proof of Abelian Hopf $H~\mathrm{mod}~K$ theorem in \cite{Abelian Hopf}. Let $x_0\in\mathbb{R}^n$ and let $H$ be the isotropy subgroup of the point $x_0,$ ie. $H=\Sigma_{x_0}.$ Moreover, let $W$ be a $H-$simple representation. Since $\Gamma$ is cyclic (in particular, abelian), $W$ is two-dimensional.
Now we can define the linear maps $A(\lambda):W\rightarrow W$ by
$$A(\lambda)=\begin{bmatrix}\lambda&-1\\1&\lambda\end{bmatrix}.$$
Since $W$ is two-dimensional it is easy to prove the commutativity with $A.$
We have
$$A(\lambda)\cdot W=\begin{bmatrix}\lambda&-1\\1&\lambda\end{bmatrix}\cdot\begin{bmatrix}a&-b\\b&a\end{bmatrix}=\begin{bmatrix}\lambda a-b&-\lambda b-a\\a+\lambda b&a\lambda-b \end{bmatrix}=W\cdot A(\lambda).$$
Next we can extend Theorem \ref{teorema polynomial equivariant} to a bifurcation problem as in Lemma \ref{lema genericity}. Let $f:\mathbb{R}^n\times\mathbb{R}\rightarrow\mathbb{R}^n$ be a $\Gamma-$equivariant polynomial such that for all $\gamma\in\Gamma,~f(\gamma x_0,\lambda)=0$ and $(df)_{\gamma x_0,\lambda}|_{\gamma W}=\gamma A(\lambda)\gamma^{-1}.$ Moreover, let $g=f|_{W+x_0}.$ From the way $f$ has been constructed, $g$ is $H-$equivariant on $W+x_0$ and $g(x_0)=0,$ hence from Lemma \ref{lema diffeomorphism} we have that $W$ is the center subspace of $(dg)_{x_0,0.}$

Next consider $(dg)_{x_0,\lambda}|_W;$ its eigenvalues are $\sigma(\lambda)\pm i\rho(\lambda)$ with $\sigma(0)=0,~\rho(0)=1$ and $\sigma'(0)\neq0.$ Then the equivariant Hopf theorem extended to a point $x_0\in\mathbb{R}^n$ implies the existence of small-amplitude periodic solutions emanating from $x_0$ with spatio-temporal symmetries $H$ and spatial symmetries $K.$

\end{proof}
\section{General considerations between the differences of the results in this article and \cite{Abelian Hopf}}
In the first place it must be highlighted that one can start we the methodology used in \cite{Abelian Hopf} and add the restrictions presented in this paper to obtain the Cyclic Hopf $H~ \mathrm{mod}~ K$ Theorem, but not vice-versa. This is obvious, because any cyclic group is abelian but not any abelian group is cyclic.\\

In this section we use the Cyclic Hopf $H~\mathrm{mod}~K$ Theorem to exhibit symmetry pairs $(H,K)$ that are admissible by the Abelian Hopf $H~\mathrm{mod}~K$ Theorem but not admissible by the Cyclic Hopf $H~\mathrm{mod}~K$ Theorem. Let $\mathbb{Z}_l$ act on $\mathbb{R}^l$ by cyclic permutation of coordinates and $\Gamma=\mathbb{Z}_l\times\mathbb{Z}_k$ act on $\mathbb{R}^l\times\mathbb{R}^k$ by the diagonal action, where $l,k>1.$ We show Abelian Hopf $H~\mathrm{mod}~K$ admissible but not Cyclic Hopf $H~\mathrm{mod}~K$ admissible pairs for this action of $\Gamma$ by classifying in Theorem \eqref{equi} all Cyclic Hopf $H~\mathrm{mod}~K$ admissible pairs $K\subset H\subset \Gamma$ and showing that there are admissible pairs that are not on the list.
\begin{priteo}\label{equi}
By applying the Cyclic Hopf $H~\mathrm{mod}~K$ Theorem, the $(H,K)$ Hopf-admissible pairs in $\Gamma$ are $(\mathbb{Z}_m\times\mathbb{Z}_n, ~\mathbb{Z}_m\times\mathbb{Z}_q)$ where $q$ divides $n$ except when $q=\frac{k}{2}$ and $n=k,$ and $(\mathbb{Z}_m\times\mathbb{Z}_n, ~\mathbb{Z}_p\times\mathbb{Z}_n)$ where $p$ divides $m$ except when $p=\frac{l}{2}$ and $m=l.$ Moreover, $m$ and $n$ are coprimes, $m$ and $q$ are coprimes with $m\neq q,$ and $p$ and $n$ are coprimes, with $p\neq n.$
\end{priteo}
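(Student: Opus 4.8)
The plan is to specialise the seven conditions of the Cyclic Hopf $H~\mathrm{mod}~K$ Theorem to $\Gamma=\mathbb{Z}_l\times\mathbb{Z}_k$ with its diagonal cyclic-permutation action on $\mathbb{R}^l\times\mathbb{R}^k$ and read off the admissible pairs. First I would record the shape of the relevant subgroups. For the permutation action of $\mathbb{Z}_l$ on $\mathbb{R}^l$ every isotropy subgroup has the form $\mathbb{Z}_a$ with $a\mid l$, and $\dim\mathrm{Fix}(\mathbb{Z}_a)=l/a$; for the diagonal action the isotropy of a point $(x,y)$ splits as $\Sigma_x\times\Sigma_y$. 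Hence both $H$ and $K$, being isotropy subgroups, are automatically products $\mathbb{Z}_m\times\mathbb{Z}_n$ and $\mathbb{Z}_p\times\mathbb{Z}_q$ with $m,p\mid l$, $n,q\mid k$, and $K\subseteq H$ gives $p\mid m$, $q\mid n$ (no non-product Goursat subgroups arise, precisely because isotropy is a product). The requirement that $H$ and $K$ be cyclic then forces $\gcd(m,n)=1$ and $\gcd(p,q)=1$, since $\mathbb{Z}_a\times\mathbb{Z}_b\cong\mathbb{Z}_{ab}$ exactly when $\gcd(a,b)=1$.

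The crux is the constraint $K=\mathrm{ker}_V(H)$ for an $H$-simple subspace $V$. Here I would exploit that in $\mathbb{R}^l\times\mathbb{R}^k$ the first factor $\mathbb{Z}_m$ acts trivially on $\mathbb{R}^k$ and the second factor $\mathbb{Z}_n$ acts trivially on $\mathbb{R}^l$. A projection/Schur argument then shows that every nontrivial $H$-irreducible subspace sits entirely inside $\mathbb{R}^l$ or entirely inside $\mathbb{R}^k$: if it projected nontrivially to both, both factors would act trivially and the summand would be trivial, contradicting the rotation action needed at a Hopf point. Since $H$ is cyclic the $H$-simple space $V$ is two-dimensional, so it lies in a single factor. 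If $V\subseteq\mathbb{R}^k$ then $\mathbb{Z}_m$ fixes $V$ pointwise, whence $\mathbb{Z}_m\subseteq K$ and $K=\mathbb{Z}_m\times\mathbb{Z}_q$ with $\mathbb{Z}_q=\mathrm{ker}_V(\mathbb{Z}_n)$; symmetrically $V\subseteq\mathbb{R}^l$ yields $K=\mathbb{Z}_p\times\mathbb{Z}_n$. These are precisely the two families of the statement, and proving that $K$ may differ from $H$ in only one cyclic factor is the main structural step.

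To isolate the exceptions I would invoke the representation clause of the Cyclic Hopf Theorem. For the family with $V\subseteq\mathbb{R}^k$ one applies it to $\mathbb{Z}_q\subseteq\mathbb{Z}_n\subseteq\mathbb{Z}_k$: a $\mathbb{Z}_n$-simple representation inside $\mathbb{R}^k$ with kernel $\mathbb{Z}_q$ exists unless $n=k$ is even and $q=k/2$. The point I would make explicit is a multiplicity count. When $n=k$ the only candidate with kernel $\mathbb{Z}_{k/2}$ is the sign representation (eigenvalue $-1$, index $j=k/2$), which occurs in the permutation module $\mathbb{R}^k$ with multiplicity one and so cannot be doubled into a $\mathbb{Z}_k$-simple $W\oplus W$ subspace; for $n<k$ one exhibits an index $j\equiv q\pmod n$ with $j\neq k/2$, giving a genuine two-dimensional $\mathbb{Z}_n$-simple summand. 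The symmetric computation on $\mathbb{R}^l$ produces the exclusion $m=l$ even, $p=l/2$.

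Finally I would close both directions. Necessity is exactly the specialisation of the seven conditions carried out above; for sufficiency, each listed pair is Hopf-admissible by running the sufficiency half of the Cyclic Hopf $H~\mathrm{mod}~K$ Theorem on the two-dimensional $V$ just exhibited. The residual $H~\mathrm{mod}~K$ hypotheses are routine: (a) holds since $H$ is cyclic, (b) since the product subgroups are isotropy subgroups, (c) since $\dim\mathrm{Fix}(K)=l/p+k/q\geq2$, and (d) from the construction used in the sufficiency proof. The coprimality clauses $\gcd(m,q)=1$ and $\gcd(p,n)=1$ are then automatic, since any common divisor of $m$ and $q$ divides $\gcd(m,n)=1$, while the inequalities $m\neq q$ and $p\neq n$ discard the degenerate coincidence of the two factors and guarantee the branch carries genuine spatio-temporal symmetry. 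I expect the representation-multiplicity argument of the third paragraph, together with the single-factor reduction of the second, to be the real obstacle; everything else is bookkeeping.
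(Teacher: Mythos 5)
Your proposal is essentially correct, but it takes a genuinely different and much more self-contained route than the paper. The paper's entire proof is a one-line deferral: it declares the result to be the restriction of the proof of Theorem 6.1 in the Abelian Hopf paper of Filipski and Golubitsky to the parameter ranges where $H$ and $K$ are cyclic, i.e.\ where the coprimality conditions hold. You instead rebuild the classification from scratch: you determine the isotropy lattice of the product action (so that $H$ and $K$ are forced to be products $\mathbb{Z}_m\times\mathbb{Z}_n$, $\mathbb{Z}_p\times\mathbb{Z}_q$ with divisibility constraints), you use a Schur-type projection argument to show that a nontrivial two-dimensional $H$-simple subspace must lie entirely in one factor $\mathbb{R}^l$ or $\mathbb{R}^k$ (which is exactly what forces $K$ to differ from $H$ in only one coordinate), and you locate the exceptional cases $q=k/2$, $n=k$ and $p=l/2$, $m=l$ via the multiplicity-one occurrence of the sign representation in the permutation module. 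All of this is sound and is, in effect, a reconstruction of the content of the cited Theorem 6.1 plus the cyclicity restriction; what your version buys is transparency about where each clause of the statement comes from, at the cost of length. The one place where your argument is as thin as the paper's is the pair of conditions $m\neq q$ and $p\neq n$: given the coprimality already imposed, $m=q$ forces $m=q=1$, so this clause only excludes the trivial subgroup $K$, and neither your appeal to ``genuine spatio-temporal symmetry'' nor the paper's silent restriction explains why that case should be discarded. If you keep your self-contained route, that is the clause you should either justify properly or flag as inherited from the source.
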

\begin{proof}
The proof is a restriction to the cases $m$ and $n$ are coprimes with $m\neq n,$ $m$ and $q$ are coprimes with $m\neq q,$ and $p$ and $n$ are coprimes, with $p\neq n,$ of the proof of Theorem 6.1 in \cite{Abelian Hopf}.
\end{proof}
To find an example of a pair $(H,K)$ that is admissible by the Abelian Hopf $H~\mathrm{mod}~K$ Theorem but not by the Cyclic Hopf $H~\mathrm{mod}~K$ Theorem, let's take $(H=\mathbb{Z}_m\times\mathbb{Z}_n,~K=\mathbb{Z}_m\times\mathbb{Z}_q)$ where $q$ divides $n$ except when $q=\frac{k}{2}$ and $n=k,$ and $m$ and $n$ are not coprimes, $m$ and $q$ are not coprimes. They are admissible by the Abelian Hopf $H~\mathrm{mod}~K$ by applying Theorem 6.1 in \cite{Abelian Hopf}. However, they are not admissible by the Cyclic Hopf $H~\mathrm{mod}~K$ Theorem because of the application of the the Fundamental Theorem of finitely generated abelian groups. Indeed, if, for example $m$ and $n$ are not coprimes then they have a common divisor integer $a\in\mathbb{R}_+$ that is prime, and in this case $m=ab,~n=ac$ for some integers $b\in\mathbb{R}_+,~c\in\mathbb{R}_+$ and the group $\mathbb{Z}_{ab}\times\mathbb{Z}_{ac}$ is not cyclic. A similar case applies for the group $K=\mathbb{Z}_m\times\mathbb{Z}_q$ if $m$ and $q$ are not coprimes with $m\neq q,$ or the group $K=\mathbb{Z}_p\times\mathbb{Z}_n$ if $n$ and $p$ are not coprimes with $n\neq p.$\\
\\

\paragraph{\bf Acknowledgements}
The author would like to thank the helpful suggestions received from the referee, which improved the presentation of this paper. He also acknowledges economical support form FCT grant SFRH/ BD/ 64374/ 2009.


\begin{thebibliography}{00}
\bibitem{GS85}{\sc M. Golubitsky, D.G. Schaeffer}, {\it Singularities and groups in bifurcation theory I}, Applied mathematical sciences \textbf{51}, Springer-Verlag, (1985).
\bibitem{GS88}{\sc M. Golubitsky, I. Stewart, D. G. Schaeffer}, {\it Singularities and groups in bifurcation theory II}, Applied mathematical sciences \textbf{69}, Springer-Verlag, (1988), 388--399.
\bibitem{GS03}{\sc M. Golubitsky, J. Stewart}, {\it The symmetry perspective}, Birkhauser Verlag, (2003), 63--68.
 487--509.
\bibitem{Abelian Hopf}{\sc N. Filipski, M. Golubitsky}, {\it The Abelian Hopf $H~\mathrm{mod}~K$ Theorem}, SIAM J. Appl. Dynam. Sys. \textbf{9}, (2010), 283--291.
\bibitem{Tisme}{\sc P. Lancaster, M. Tismenetsky}, {\it The theory of matrices, 2-nd edition}, Academic Press New-York, (1985).
\end{thebibliography}
\end{document}